\DeclareMathOperator{\sgn}{sgn}
\title[Floquet isospectrality of The Zero Potential]{ Floquet isospectrality of The Zero Potential for discrete periodic Schr\"odinger operators}
\author[M. Faust]{Matthew Faust}
\address[M. Faust]{ Department of Mathematics, Texas A\&M University, College Station, TX 77843-3368, USA} \email{mfaust@tamu.edu}
\author[W. Liu]{Wencai Liu}
\address[W. Liu]{ Department of Mathematics, Texas A\&M University, College Station, TX 77843-3368, USA} \email{liuwencai1226@gmail.com; wencail@tamu.edu}
\author[R. Matos]{Rodrigo Matos}
\address[R. Matos]{ Department of Mathematics, PUC-Rio\newline  Rio de Janeiro, Brazil, 22451-900} \email{rodrigo@mat.puc-rio.br}
\author[J. Plute]{Jenna Plute}
\address[J. Plute]{ Department of Mathematics, Texas A\&M University, College Station, TX 77843-3368, USA} \email{jplute@tamu.edu}
\author[J. Robinson]{Jonah Robinson}
\address[J. Robinson]{ Department of Mathematics, Texas A\&M University, College Station, TX 77843-3368, USA} \email{jonahrobinson@tamu.edu}
\author[Y. Tao]{Yichen Tao}
\address[Y. Tao]{ Department of Mathematics, Texas A\&M University, College Station, TX 77843-3368, USA} \email{yichentao@tamu.edu}
\author[E. Tran]{Ethan Tran}
\address[E. Tran]{ Department of Mathematics, Texas A\&M University, College Station, TX 77843-3368, USA} \email{et.tran50@tamu.edu}
\author[C. Zhuang]{Cindy Zhuang}
\address[C. Zhuang]{ Department of Mathematics, Texas A\&M University, College Station, TX 77843-3368, USA} \email{cindylz@tamu.edu}
\keywords{Ambarzumyan-type  problem, Floquet isospectrality, discrete periodic Schr\"odinger operators.}
\thanks{{\em 2020 Mathematics Subject Classification.} Primary: 58J53. Secondary: 47B36, 35P05, 35J10.}
\theoremstyle{plain}
\newtheorem{theorem}{Theorem}[section]
\newcommand{\R}{\mathbb{R}}
\newtheorem{lemma}[theorem]{Lemma}
\newtheorem{remark}{Remark}
\newcommand{\C}{\mathbb{C}}
\newcommand{\Z}{\mathbb{Z}}
\theoremstyle{plain}
\newtheorem{definition}{Definition}
\newtheorem{conjecture}{Conjecture}
\newcommand{\defcolor}[1]{{\color{blue}#1}}
\newcommand{\demph}[1]{\defcolor{{\sl #1}}}
\def\rod#1{#1}
\begin{document}
	
	
	\begin{abstract}
	 
	Let  $\Gamma=q_1\mathbb{Z}\oplus q_2 \mathbb{Z}\oplus\cdots\oplus q_d\mathbb{Z}$, with $q_j\in (\mathbb{Z}^+)^d$ for each $j\in \{1,\ldots,d\}$, and denote by $\Delta$ the discrete Laplacian on $\ell^2\left( \mathbb{Z}^d\right)$.
Using Macaulay2, we first numerically find  complex-valued $\Gamma$-periodic potentials $V:\mathbb{Z}^d\to \mathbb{C}$  such that the operators
  $\Delta+V$ and $\Delta$ are Floquet isospectral. We then use combinatorial methods to validate these numerical solutions.

	\end{abstract}
	
	\maketitle 
	\section{Introduction and main results}
 Let $\Delta$ denote the discrete Laplacian on $\Z^d$:
\begin{equation*}
	(\Delta u)(n)=\sum_{|n^\prime-n|_1=1}u(n^\prime),
	\end{equation*}
where for $n=(n_1,n_2,\cdots,n_d)$, $n^\prime=(n_1^\prime,n_2^\prime,\cdots,n_d^\prime)\in\Z^d$ we write \begin{equation*}
	|n^\prime-n|_1:=\sum_{i=1}^d |n_i-n^\prime_i|.
	\end{equation*}

In this paper we study finite difference equations of the type
\begin{equation}~\label{eq1}
(\Delta u)(n) + V(n)u(n) = \lambda u(n),\,\,\text{for all}\, n \in \Z^d,
\end{equation}
subject to the Floquet boundary condition
\begin{equation}~\label{eq2}
u(n + q_je_j) = e^{2 \pi i k_j} u(n), \text{ for all } n \in \Z^d \,\,,k_j\in [0,1]\,\, \text{and } j \in \demph{[d] := \{1,\dots, d\}}
\end{equation}
where 
$\{e_j\}_{j=1}^d$ is the standard basis in $\R^d$ \rod{and $\{q_j\}^d_{j=1}$ are positive integers}. The potential $V:\Z^d \to \C$ is assumed to be $\Gamma$-periodic, namely: letting $\Gamma = q_1\Z \oplus \dots \oplus q_d \Z$ we impose that
\begin{equation*}V(n+m)=V(n),\,\,\text{for all}\,\, n\in \mathbb{Z}^d\,\,\text{and}\,\,m\in \Gamma.
\end{equation*}

Writing $Q = \prod_{i=1}^d q_i$, equation~\eqref{eq1} with the boundary condition~\eqref{eq2} can be realized as the eigen-equation of a finite $Q \times Q$ matrix $D_V(k)$, where $k = (k_1, \dots, k_d)$. Denote by $\sigma_V(k)=\sigma(D_V(k))$ the set of eigenvalues of $D_V(k)$, including algebraic multiplicity. 
\begin{definition}
Assume that $V$ and $V'$ are $\Gamma$-periodic potentials. Two operators $\Delta + V$ and $\Delta + V'$ are called \demph{Floquet isospectral} if $\sigma_V(k) = \sigma_{V'}(k)$ for all $k \in \R^d$. In this case it is also common to say that the $\Gamma$-periodic potentials $V$ and $V'$ are Floquet isospectral.
\end{definition}

 Understanding when two potentials $V$ and $V'$ are Floquet isospectral is an interesting problem with a rich history of study ~\cite{kapiii,Kapi,Kapii,liujde}. In this paper we focus on the discrete case, however there is also a deep body of work for the continuous case ~\cite{ERT84,MT76,ERTII,gki,wa,gui90,eskin89}.
 
   Floquet isospectrality belongs to the class of  inverse spectral  problems, \rod{where one aims at recovering information about the original operator from certain spectral data. Among such problems we also highlight} Fermi isospectrality~\cite{liu2021fermi,bktcm91}, Borg's Theorem\cite{liuborg,Ges1, borg,s2011} and \rod{recovery of the potential from its integrated density of states} \cite{GKTBook}. For more background and history of inverse spectral problems for periodic potentials we refer the reader to the following recent surveys~\cite{kuchment2023analytic,liujmp22,ksurvey}.

 In this paper we focus on the Ambarzumyan-type inverse problem of finding potentials isospectral to the zero potential, denoted henceforth by $\bf{0}$. It is a well-known and classical result that there no real non-zero potentials Floquet isospectral to $\bf{0}$ in both the continuous and discrete cases. For more general work on the well-studied class of Ambarzumyan-type problems we refer the reader to \cite[Chapter 14]{kur1}. For the continuous case it has been shown that there are many complex-valued potentials that are  Floquet isospectral to $\bf{0}$ (e.g. \cite{GU}). It is a Folklore result that there exist
 complex-valued $\Gamma$-periodic potentials $V$ which are Floquet isospectral to $\bf{0}$ in the discrete case. Our goal of this paper is multi-fold. Firstly we present explicit potentials isospectral to $\bf{0}$. \rod{Secondly, our method of proof is, to the best of our knowledge, new. More concretely, we introduce combinatorial language and techniques which might be of interest to other problems in the realm of the spectral theory of discrete  Schr\"odinger operators.}

\medskip
\begin{theorem}~\label{THM:1}
    Let  $\Gamma=q_1\mathbb{Z}\oplus q_2 \mathbb{Z}\oplus\cdots\oplus q_d\mathbb{Z}$. Assume that at least one of $q_j$, $j=1,2,\cdots$ is even.
    Then there exist a nonzero $\Gamma$-periodic function $V$ Floquet isospectral to $\bf{0}$.
\end{theorem}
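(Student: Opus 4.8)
The plan is to turn Floquet isospectrality into a finite system of polynomial identities and then to exhibit a non-trivial solution that uses an even period. Fix a fundamental domain $W=\{0,\dots,q_1-1\}\times\cdots\times\{0,\dots,q_d-1\}$, so that $D_V(k)=D_0(k)+\operatorname{diag}(V|_W)$; then $\Delta+V$ and $\Delta$ are Floquet isospectral iff $\det(\lambda I-D_V(k))=\det(\lambda I-D_0(k))$ identically in $\lambda$ and $k$. Expanding the determinant multilinearly in the diagonal perturbation gives
\begin{equation*}
\det\!\big(\lambda I-D_V(k)\big)=\sum_{S\subseteq W}\Big(\prod_{n\in S}(-V(n))\Big)\,P_{W\setminus S}(\lambda,k),\qquad P_U(\lambda,k):=\det\!\big((\lambda I-D_0(k))|_U\big),
\end{equation*}
where $P_U$ is the Floquet characteristic polynomial of the subgraph of $\Z^d/\Gamma$ induced on the site set $U$. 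Thus isospectrality is equivalent to $\sum_{\emptyset\ne S\subseteq W}\big(\prod_{n\in S}(-V(n))\big)P_{W\setminus S}(\lambda,k)\equiv 0$. Each $P_{W\setminus S}$ expands combinatorially as a signed sum over vertex-disjoint unions of edges and cycles of the induced subgraph, with the Bloch factors $e^{2\pi i k_j}$ entering only through cycles that wind nontrivially around the torus; so matching coefficients of the monomials in $\lambda$ and in the $e^{2\pi i k_j}$ produces a finite polynomial system in the unknowns $\{V(n)\}_{n\in W}$. (Equivalently, and often more transparent for spotting cancellations, one may match the traces $\operatorname{tr}(D_V(k)^m)$ and $\operatorname{tr}(D_0(k)^m)$ for $1\le m\le Q$ and expand each as a sum over closed walks on $\Z^d/\Gamma$ that may repeat vertices.) This system is precisely what is analyzed in Macaulay2, and also what the combinatorial argument must treat by hand.

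To find a solution I would use the even period $q_1$ to reduce the dimension. Searching first for $V$ that depends only on $n_1$, say $V(n)=v(n_1)$ with $v\colon\Z/q_1\Z\to\C$, the operator $D_V(k)$ is the Kronecker sum $D_v^{(1)}(k_1)\otimes I+I\otimes D_0^{\perp}(k')$ of the one-dimensional Floquet operator on the cycle $C_{q_1}$ with the Floquet Laplacian in the remaining $d-1$ directions; hence $\det(\lambda I-D_V(k))=\prod_j\det\!\big((\lambda-\mu_j(k'))I-D_v^{(1)}(k_1)\big)$ over the eigenvalues $\mu_j(k')$ of $D_0^{\perp}(k')$, and comparing with $V=\mathbf 0$ shows it suffices that $v$ be Floquet isospectral to $\mathbf 0$ on $C_{q_1}$. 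For that one-dimensional problem the combinatorics degenerate pleasantly: deleting any nonempty $S$ from $C_{q_1}$ leaves a disjoint union of paths, so $P_{C_{q_1}\setminus S}$ is independent of $k_1$ and equals $\prod_i U_{a_i(S)}(\lambda)$, where $U_m$ is the characteristic polynomial of the path on $m$ vertices and the $a_i(S)$ are the lengths of the arcs left by $S$. The isospectrality requirement collapses to the single polynomial identity $\sum_{\emptyset\ne S}\prod_{n\in S}(-v(n))\prod_i U_{a_i(S)}(\lambda)\equiv 0$ in $\lambda$ --- a short list of equations of elementary-symmetric type in $v(0),\dots,v(q_1-1)$ --- and once $q_1$ is even (and, say, $\ge 4$) these have nonzero solutions: for instance $q_1=4$ is solved by $v=(1+i,\,-1-i,\,-1+i,\,1-i)$, and one records an explicit $q_1$-dependent family (or, following the paper, extracts it from Macaulay2).

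The crux --- and the step I expect to be the real obstacle --- is verifying that the chosen potential annihilates every equation of the system, uniformly in the period(s). The decisive structural input is that $C_{q_1}$ is bipartite whenever $q_1$ is even: in the walk expansion of $\operatorname{tr}(D_V^m)-\operatorname{tr}(D_0^m)$ every closed walk of odd combinatorial length contributes $0$, which simultaneously eliminates many equations and pins down exactly which combination of nearest-neighbour products of the $v(n)$ (and, in the multidimensional case, of products around short cycles) must cancel at each order in $\lambda$. Carrying this accounting through --- identifying which path-polynomial products recur and checking that the contributions of the chosen potential sum to zero at every power of $\lambda$ --- is the combinatorial heart of the proof and what makes it independent of the computer algebra. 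The most delicate situation is when every even period equals $2$, where the one-dimensional reduction degenerates: one then works directly with the full $d$-dimensional polynomial system, leaning on the Macaulay2 search to locate a non-trivial potential and on the same induced-subgraph/walk identities to certify it.
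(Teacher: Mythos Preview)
Your outline matches the paper's route almost exactly: reduce to a one-dimensional potential via separability (the paper's Lemma~\ref{LEM:Sep}), note that in one dimension isospectrality at a single quasi-momentum suffices (your path-polynomial remark is a nice self-contained explanation of this), and then exhibit and verify an explicit $v$. What you flag as ``the crux --- and the step I expect to be the real obstacle'' is precisely the content the paper supplies and you do not. For $q_1=2m$ the paper takes $v_1=1+i$, $v_2=1-i$, $v_{m+1}=-1+i$, $v_{m+2}=-1-i$ and $v_j=0$ otherwise (your $q_1=4$ example is a reindexing of the $m=2$ case), and then computes each coefficient $F_{2m-k}:=[\lambda^{2m-k}](P_v-P_{\mathbf 0})$ as an explicit polynomial in these four values whose coefficients are counts $S(n,p)$ of disjoint cycle covers of the Jacobi digraph $J_n$ with exactly $p$ two-cycles. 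With the chosen numbers, the odd-$k$ equations vanish via $v_1+v_2+v_3+v_4=0$ and $\sum_{i<j<k}v_iv_jv_k=0$, while the even-$k$ equations collapse to the single identity
\[
S(2m-2,\ell-1)=\sum_{i}S(m-2,i)\,S(m-2,\ell-2-i)+\sum_{i}S(m-1,i)\,S(m-1,\ell-1-i),
\]
proved bijectively by splitting cycle covers of $J_{2m-2}$ according to whether the middle $2$-cycle $\{m-1,m\}$ is used. Your bipartiteness heuristic is morally the paper's parity remark (only monomials of degree congruent to $k$ mod $2$ occur in $F_{2m-k}$), but parity alone does not force the cancellation; the identity above is what actually closes the argument, and your proposal stops before producing either the general $v$ or this verification.

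Your caution about the case where the even period equals $2$ is well placed. The paper's construction tacitly needs $m\ge 2$ (for $m=1$ the index assignments $v_2$ and $v_{m+1}$ collide), and in fact for $d=1$, $q_1=2$ the $2\times 2$ characteristic polynomial forces $v_1+v_2=0=v_1v_2$, so no nonzero one-dimensional example exists. The paper does not treat this boundary case separately; your instinct to attack it directly in $d$ dimensions is the right one, but ``lean on the Macaulay2 search'' is not yet a proof, so that gap remains in your proposal as well.
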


\begin{remark}
We prove Theorem~\ref{THM:1} by constructing explicit potentials isospectral to $\bf{0}$ (See Theorem~\ref{THM:3.2}).
\end{remark}

In order to construct these explicit examples, we began by experimentally finding solutions using Macaulay2. Although we do not use all the numerical solutions found, our experimental data enabled us to notice a particular pattern for when potentials are Floquet isospectral to $\bf{0}$.  In this paper, we begin with this pattern and focus on proving it. \rod{The reader interested in how this pattern was discovered by us is invited to consult our annotated Macaulay2 code at the github} repository\footnote{https://mattfaust.github.io/IsoZ/IsoZ.m2}. By Floquet theorem, the discrete Sch\"odinger operator can be represented as the direct sum of a family of  finite square matrices. By modeling this matrix as a finite directed graph, we are able to use graph theory and algebra to verify the solutions suggested by the observed numerical pattern.

\rod{The remainder of this note is organized as follows}. In Section~\ref{SEC:2} we give a brief background on the combinatorial constructions we will employ. In Section~\ref{SEC:3} we reduce Theorem~\ref{THM:1} to Theorem~\ref{THM:3.2}. In Section~\ref{SEC:4} we prove Theorem~\ref{THM:3.2}.

    \section{Combinatorial Background and Notation}~\label{SEC:2}
    In this section we introduce basic terminology and also make a few remarks which will be relevant to the proof of the main results of this note.  Let us start by introducing some notation.
    \begin{enumerate}[label=(\roman*), series=list1]
  \item Fix a $m \times m$ matrix $M=\left(M_{ij}\right)_{m\times m}$ with entries in a ring $R$. The digraph of $M$ is a weighted directed graph $G:=(\mathcal{V},\mathcal{E},w)$ with vertex set $\mathcal{V}=[m]$ and edges given by $(i,j) \in \mathcal{E}$ if $M_{i,j} \neq 0$. \rod{The weight function $w : \mathcal{E} \to R$ is naturally inherited from $M$ via $w(i,j) = M_{i,j}$ for all $(i,j)\in \mathcal{E}$.}
 \rod{ \item A vertex cycle cover is a union of cycles which are subgraphs of $G$ and contains all vertices of $G$.} 
  \item  \rod{A \demph{disjoint cycle cover} of a digraph is a vertex cycle cover for which two different cycles have no vertices in common}.
  
  \item We denote by $\mathcal{S}_m$ the symmetric group of $m$ elements. The permutations in $\mathcal{S}_m$ will be denoted by $\sigma$.
  \item We also let $M_\sigma= \prod_{i=1}^m M_{i,\sigma(i)}$.
  \item Since any permutation $\sigma \in S_m$ is a product of disjoint cycles, it is easy to see that if $M_\sigma\neq 0$ then $\sigma$ induces a disjoint cycle cover $\eta$ of $G$ such that $$w(\eta) := \prod_{e \in \eta} w(e) = M_{\sigma}$$ We then denote $\sgn(\eta):= \sgn(\sigma)$.
  \item We let 
\begin{equation*}\mathcal{P}=\{\sigma\in \mathcal{S}_m :\,\,M_\sigma \neq 0\}. \end{equation*}
Note that $\mathcal{P}$ is in bijection with the collection of disjoint cycle covers of $G$, denoted henceforth by $\mathcal{U}_G$.
\end{enumerate}
We now make the following remarks:

\begin{enumerate}[label=(R\arabic*)]
\item Notice that the digraph of $M$ is a directed version of the adjacency matrix. 
\vspace{0.1cm}
\item With the given definitions, one has that
\begin{equation}
    \label{eqn:det trans}
        \det(M) = \sum_{\sigma \in P} \sgn(\sigma) M_\sigma  = \sum_{\eta \in U} \sgn(\eta) w(\eta).
    \end{equation}
\end{enumerate}

\begin{enumerate}[resume=list1, label=(\roman*)]
\item    Let $\demph{[t]}f$ denote the coefficient of the monomial term $t$ of a polynomial $f$. For example if $f(z) = 4 z^2 + 9 z - 2$ then  $[z^2]f(z) = 4, [z] f(z) = 9,$ and $[z^0] f(z) = -2$.

\item We say that $M=\left(M_{ij}\right)_{m\times m}$ is a \demph{Jacobi matrix} if it has the following property: \rod{If $M_{ij}\neq 0$ for some $i,j\in [m]$ then $\abs{i-j}\leq 1$. In this case we denote by} \demph{$J_m$} be the digraph of $M$, as shown in Figure~\ref{Fig:1}. It is easy to see that if $\eta$ is a disjoint cycle cover of $J_m$, $\eta$ must be composed of only cycles of length $1$ and $2$.

\item  Let \demph{$S(m,p)$} denote the number of disjoint cycle covers of $J_m$ with exactly $p$ $2$-cycles. This quantity will be used in the proof of Theorem~\ref{THM:3}. Although we do not need the explicit value of $S(m,p)$, by basic combinatorial identities one finds that $S(m,p) = \binom{m-p}{p}$.
\end{enumerate}

\begin{figure}[ht]
 \begin{tikzpicture}[->]
    \node[ellipse,draw] (a) at (0,0) {1};
    \node[ellipse,draw] (b) at (4,0) {2};
    \node[ellipse,draw] (d) at (8,0) {m-1};
    \node[ellipse,draw] (e) at (12,0) {m};      
    
    \path (a) edge[bend right]              node[below] {$M_{1,2}$} (b);
    \path (b) edge[bend right]           node[above] {$M_{2,1}$} (a);
    \node (i) at ($(b)!.5!(d)$) {\dots};
    \path (b) edge[bend right]              node[below] {$M_{2,3}$} (i);
    \path (i) edge[bend right]           node[above] {$M_{3,2}$} (b);
     \path (i) edge[bend right]              node[below] {$M_{m-2,m-1}$} (d);
    \path (d) edge[bend right]           node[above] {$M_{m-1,m-2}$} (i);
    \path (d) edge[bend right]              node[below] {$M_{m-1,m}$} (e);
    \path (e) edge[bend right]           node[above] {$M_{m,m-1}$} (d);
    \path (a) edge [loop below] node {$M_{1,1}$} (a);
    \path (b) edge [loop below] node {$M_{2,2}$} (b);
    \path (d) edge [loop below] node {$M_{m-1,m-1}$} (d);
    \path (e) edge [loop below] node {$M_{m,m}$} (e);
  \end{tikzpicture} 
\caption{The weighted digraph $J_m$ of an $m \times m$ Jacobi matrix $M$.}
\label{Fig:1}
\end{figure}
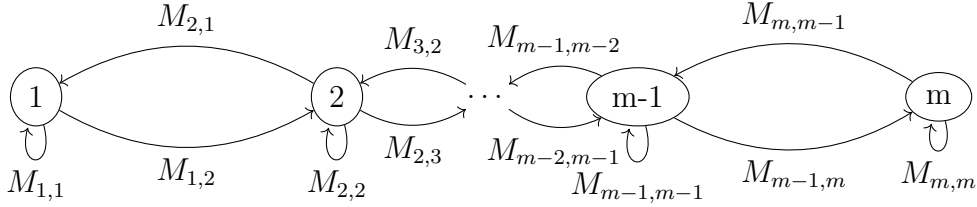

\section{Reduction of Main Theorem}~\label{SEC:3}

To prove Theorem~\ref{THM:1}, we provide an explicit potential \rod{which is} Floquet isospectral to $\bf{0}$. 
 
 Suppose that $m \in \Z^+$. Let us define \rod{a vector $v=(v_1,\ldots,v_{2m})\in \C^{2m}$ via} \begin{equation}\label{eq:potential}
 v \in \C^{2m}, v_1 = 1+i, v_2 = 1-i, v_{m+1} = -1+i, v_{m+2} = -1-i \text{ and } v_l = 0 \text{ otherwise.} 
 \end{equation}  

 As a $2m\Z$-periodic potential $r$ is determined by the values $r_1 := r(1)$ $,\dots,$ $r_{2m} := r(2m)$, we often abuse notation and identify $r$ with the vector $r=(r_1,\dots, r_{2m})$.

\begin{theorem}~\label{THM:3} \rod{Let  $m \in \Z^+$} and $\Gamma = 2m \Z \oplus q_2 \Z \oplus \dots \oplus q_d \Z$, and define $V : \Z^d \to \C$ be the separable $\Gamma$-periodic potential given by $V(n_1,\dots, n_d) = v_{(n_1 \text{ mod }{2m})}$ \rod{with $v$ given by \eqref{eq:potential}. Then} $V$ is Floquet isospectral to $\bf{0}$.
\end{theorem}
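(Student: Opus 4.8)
The plan is to exploit separability to reduce the $d$-dimensional Floquet isospectrality to a one-dimensional statement, and then verify the one-dimensional statement by a direct computation of the Floquet matrix determinant using the combinatorial machinery of Section~\ref{SEC:2}. Concretely, since $V(n_1,\dots,n_d)$ depends only on $n_1$, the operator $D_V(k)$ decomposes as a tensor sum $D_V(k) = D_v(k_1)\otimes I + I \otimes \big(\text{one-dimensional Laplacian pieces in } k_2,\dots,k_d\big)$, so that $\sigma_V(k)$ is the set of all sums $\mu + \nu$ where $\mu \in \sigma_v(k_1)$ (the spectrum of the $2m\times 2m$ Floquet matrix of the $2m\Z$-periodic potential $v$ on $\Z$) and $\nu$ ranges over the eigenvalues of the unperturbed lower-dimensional factor, which are the same for $V$ and $\bf{0}$. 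Hence it suffices to show $\sigma_v(k_1) = \sigma_{\bf 0}(k_1)$ for all $k_1$, i.e. that the $2m\times 2m$ Floquet matrix for the one-dimensional potential $v$ has the same characteristic polynomial as that of the zero potential. I would state this one-dimensional reduction as a lemma and then devote the rest of the proof to it.

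For the one-dimensional problem, write $D_v(k_1)$ as the $2m\times 2m$ matrix which is tridiagonal except for the corner entries $e^{\pm 2\pi i k_1}$ coming from the Floquet boundary condition, with diagonal $v_1,\dots,v_{2m}$. I would compute $\det(\lambda I - D_v(k_1))$ by the disjoint-cycle-cover expansion \eqref{eqn:det trans}: every disjoint cycle cover of the digraph of $\lambda I - D_v(k_1)$ is a union of $1$-cycles (loops, contributing $\lambda - v_j$ or just $\lambda$ where $v_j=0$), $2$-cycles along an edge (contributing $-1$ for adjacent-index edges, since off-diagonal entries of the Laplacian are $1$), and possibly the two long $2m$-cycles that wrap around through the corner entries (contributing $\pm e^{\pm 2\pi i k_1}$ times products of off-diagonal entries). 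The key point is that the only place where $v$ enters is through the four nonzero values $v_1 = 1+i$, $v_2 = 1-i$, $v_{m+1} = -1+i$, $v_{m+2} = -1-i$, which sit at the diagonal positions $1,2$ and $m+1,m+2$. One then compares term by term with the same expansion for the zero potential: covers that use none of the vertices $\{1,2,m+1,m+2\}$ in a loop contribute identically, and one must show that the total contribution of the remaining covers is unchanged when the four diagonal entries are switched on.

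The cancellation is where the specific choice of values in \eqref{eq:potential} matters, and this is the step I expect to be the main obstacle. The guiding idea is that $v_1 v_2 = (1+i)(1-i) = 2$ and $v_{m+1}v_{m+2} = (-1+i)(-1-i) = 2$ as well, while $v_1 + v_2 = 2$, $v_{m+1}+v_{m+2} = -2$, so the linear contributions from the two "blocks" cancel against each other and the quadratic contributions match the contribution of a $2$-cycle, i.e. $v_j v_{j+1} = 2 = -(-1)\cdot(-2)$ or similar; more precisely one wants $v_1 v_2 - (v_1+v_2)\cdot(\text{something}) = (\text{the value from the zero-potential } 2\text{-cycle on edge }(1,2))$, and likewise for the block at $m+1,m+2$, together with the global sign bookkeeping for the two wrap-around cycles (whose signs are $(-1)^{2m-1} = -1$ and which pick up $e^{\pm 2\pi i k_1}$). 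I would organize this by grouping the cycle covers according to their restriction to $\{1,2,\dots\}\setminus\{1,2,m+1,m+2\}$: fixing such a "background" cover, the remaining freedom is a local choice near positions $1,2$ and near $m+1,m+2$ (loop/loop, loop/nothing, $2$-cycle, or being swept up into a wrap-around cycle), and I would show the weighted sum over these local choices is the same for $v$ and for $\bf 0$, using the numerical identities above and the count $S(m,p)=\binom{m-p}{p}$ to keep track of how many background covers of each size occur. Assembling these local cancellations over all backgrounds, and separately checking that the wrap-around cycles' contributions are unaffected (since they traverse edges $(1,2)$ and $(m+1,m+2)$ but not the loops there), yields $\det(\lambda I - D_v(k_1)) = \det(\lambda I - D_{\bf 0}(k_1))$ for every $k_1$, completing the proof.
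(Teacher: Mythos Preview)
Your separability reduction and your use of the cycle-cover expansion \eqref{eqn:det trans} are correct and match the paper. The paper additionally invokes a known one-dimensional fact to reduce to the single quasi-momentum $k_1=0$, whereas you work at general $k_1$; your remark that the wrap-around cycles never touch the diagonal entries accomplishes the same reduction, so this difference is harmless.

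The gap is in the cancellation step. Your proposed mechanism---that within each block the quadratic term $v_jv_{j+1}=2$ ``matches the contribution of a $2$-cycle'' while the linear terms $v_1+v_2=2$ and $v_{m+1}+v_{m+2}=-2$ cancel between blocks---is not how the arithmetic actually works. A $2$-cycle contributes $-1$, not $2$, so there is no local replacement of the kind you describe; and if you fix a background on $\{1,\dots,2m\}\setminus\{1,2,m+1,m+2\}$ and try to factor the remaining sum as $(\text{block}_1)\times(\text{block}_2)$, the product for $v$ and for $\bf 0$ do \emph{not} agree background-by-background as soon as any of the boundary edges $\{2m,1\},\{2,3\},\{m,m+1\},\{m+2,m+3\}$ is present (already a single such edge breaks it). The point you are missing is that $P_v-P_{\bf 0}$, viewed as a polynomial in $v_1,\dots,v_4$, contains not only the within-block monomials $v_1v_2$ and $v_3v_4$ but also the cross-block monomials $v_1v_3,\ v_1v_4,\ v_2v_3,\ v_2v_4$, and these carry the proof: one has $v_1v_4+v_2v_3=0$, but $v_1v_3+v_2v_4=-4\neq 0$, and the vanishing of each $\lambda$-coefficient ultimately reduces to the global identity
\[
S(2m-2,\ell-1)\;=\;\sum_{i=0}^{\ell-2}S(m-2,i)\,S(m-2,\ell-2-i)\;+\;\sum_{i=0}^{\ell-1}S(m-1,i)\,S(m-1,\ell-1-i),
\]
which cannot be seen from a block-local viewpoint. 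The paper's route is to first write $[\lambda^{2m-k}](P_v-P_{\bf 0})$ explicitly as a polynomial in $v_1,\dots,v_4$ with coefficients given by such $S$-sums (Theorem~\ref{thm:Fnformula}), and only then substitute the numerical values and invoke the identity above; your argument needs to be reorganized along these lines.
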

\rod{Theorem~\ref{THM:3} clearly implies Theorem~\ref{THM:1}}

To prove Theorem~\ref{THM:3} we need only show the following theorem.

\begin{theorem}~\label{THM:3.2} Let $v$ be given by ~\eqref{eq:potential}, then the following two matrices have the same eigenvalues, \rod{including multiplicity} 
        \begin{equation}\label{eq:defmatrices}
        D_{v}=\begin{pmatrix}
            v_1 & 1 & 0 & \dots & 0 & 1 \\
            1 & v_2 & 1 & 0 & \dots & 0 \\
            0 & 1 & \ddots & \ddots & \ddots & \vdots \\
            \vdots & \ddots & \ddots & \ddots & \ddots & 0\\ 
            0 & 0 & \dots & 1 & v_{2m-1} & 1 \\
            1 & 0 & \dots & 0 & 1 & v_{2m} \\
        \end{pmatrix}\\
      ,\,\,\, D_{\bf{0}}=\begin{pmatrix}
            0\, & 1 & 0 & \dots & 0 & 1 \\
            1 & 0\, & 1 & 0 & \dots & 0 \\
            0 & 1 & \ddots\, & \ddots & \ddots & \vdots \\
            \vdots & \ddots & \ddots & \ddots\, & \ddots & 0\\ 
            0 & 0 & \dots & 1 & 0\, & 1 \\
            1 & 0 & \dots & 0 & 1 & 0\, \\
        \end{pmatrix}.\\
        \end{equation}
\end{theorem}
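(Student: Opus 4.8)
The plan is to compare the characteristic polynomials of $D_v$ and $D_{\bf 0}$ directly, using the combinatorial formula \eqref{eqn:det trans} for the determinant as a sum over disjoint cycle covers of the associated digraph. Both matrices have the digraph of a circulant-type matrix: the Jacobi digraph $J_{2m}$ together with the two extra ``wrap-around'' edges $(1,2m)$ and $(2m,1)$. Thus a disjoint cycle cover of the digraph of $D_v - \lambda I$ is either (a) a disjoint cycle cover of $J_{2m}$ (built from loops and $2$-cycles on consecutive vertices, as noted in the excerpt), or (b) one that uses the long edges: either the single $2m$-cycle $1\to 2\to\cdots\to 2m\to 1$, the reverse $2m$-cycle, or the $2$-cycle $\{1,2m\}$ together with a disjoint cycle cover of the remaining path on vertices $2,\dots,2m-1$. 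Accordingly
\[
\det(D_v - \lambda I) = \Big(\sum_{\eta \subseteq J_{2m}} \sgn(\eta)\, w(\eta)\Big) + (-1)^{2m+1}\cdot 2 + (\text{term for the } \{1,2m\}\text{-}2\text{-cycle}),
\]
and the analogous identity holds for $D_{\bf 0}$ with $v$ replaced by $\bf 0$. Since the long-edge contributions in (b) involve only the entries $1$ (never any $v_l$), except for the path-cover term on vertices $2,\dots,2m-1$, the entire difference $\det(D_v-\lambda I) - \det(D_{\bf 0}-\lambda I)$ reduces to a difference of sums over cycle covers built from loops and $2$-cycles on a path, i.e.\ to Jacobi-type determinants.

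Next I would make this explicit. Expanding the $J_{2m}$-part: a disjoint cycle cover with $p$ many $2$-cycles (on consecutive pairs) contributes $\sgn = (-1)^p$ times the product of the chosen $2$-cycle weights (each equal to $1\cdot 1 = 1$) times the product of $(v_i - \lambda)$ over the $2m - 2p$ fixed points. Grouping by which pairs are chosen, one obtains that $\det$ is a signed sum, over all matchings $\mathcal{M}$ of consecutive pairs in the cycle $\mathbb{Z}/2m$, of $(-1)^{|\mathcal{M}|}\prod_{i \text{ unmatched}}(v_i - \lambda)$. The key point is that $v$ has only four nonzero entries, at positions $1,2$ (values $1\pm i$) and $m+1, m+2$ (values $-1\pm i$), and these are arranged in two ``adjacent pairs.'' I would expand $\prod_{i}(v_i-\lambda)$-type products and track exactly the terms that differ from the $v = \bf 0$ case: these are the terms in which at least one of the four special vertices is a fixed point contributing its $v$-value rather than $-\lambda$. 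The heart of the computation is then a cancellation: the contributions of $v_1 = 1+i$ and $v_2 = 1-i$ (and their products with lower-order data) cancel against those of $v_{m+1} = -1+i$ and $v_{m+2} = -1-i$, using both the sign factor $(-1)^{|\mathcal{M}|}$ coming from whether the partner vertex is matched off and the symmetry $1+i \leftrightarrow -1+i$, $1-i \leftrightarrow -1-i$ together with $(1+i)(1-i) = 2 = (-1+i)(-1-i)$ and $(1+i)+(1-i) = 2 = -\big((-1+i)+(-1-i)\big)$.

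I expect the main obstacle to be bookkeeping the cross terms correctly: when a special vertex, say $1$, is a fixed point, its neighbors $2$ and $2m$ may or may not be matched (to each other's partners or to $1$'s other neighbor), and the sign of the matching changes accordingly; one must show that summing over all compatible configurations of the remaining path the ``extra'' $v$-terms organize into a telescoping/cancelling pattern. A clean way to handle this is to write $f(\lambda) := \det(D_v - \lambda I)$ via a transfer-matrix (continuant) recursion for the path determinants and peel off the two wrap-around edges, reducing everything to a product of $2\times 2$ transfer matrices in which only four of them differ from the $\bf 0$ case; then the claim becomes the identity that this perturbed product has the same trace-type invariants. Whichever route, once the characteristic polynomials are shown equal, the matrices have the same eigenvalues with multiplicity, proving Theorem~\ref{THM:3.2}.
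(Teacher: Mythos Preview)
Your approach is essentially the paper's: expand $P_v(\lambda)-P_{\bf 0}(\lambda)$ via \eqref{eqn:det trans}, split off the wrap-around cycles, and observe that the surviving terms are sums over matchings on a path weighted by the four nonzero $v$-entries. Where you stop, however, is exactly where the content of the proof lies.

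The algebraic relations you isolate, $v_1v_2=v_3v_4$ and $v_1+v_2=-(v_3+v_4)$ (equivalently $v_3=-v_2$, $v_4=-v_1$), do dispose of all odd-degree monomials in $v$ and of the monomial $v_1v_4+v_2v_3$. But the even-degree difference still contains the combination
\[
v_1v_2v_3v_4\cdot A_\ell+(v_1v_2+v_3v_4)\cdot B_\ell+(v_1v_3+v_2v_4)\cdot C_\ell,
\]
which for the specific values equals $4(A_\ell+B_\ell-C_\ell)$; the vanishing of this is \emph{not} an algebraic fact about the $v_j$ but a combinatorial identity among the matching counts $S(n,p)$. In the paper this is precisely \eqref{eq:combinatorialident},
\[
S(2m-2,\ell-1)=\sum_{i=0}^{\ell-2}S(m-2,i)S(m-2,\ell-2-i)+\sum_{i=0}^{\ell-1}S(m-1,i)S(m-1,\ell-1-i),
\]
proved by partitioning cycle covers of $J_{2m-2}$ according to whether the middle $2$-cycle is used. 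Your phrase ``telescoping/cancelling pattern'' does not yet capture this; without it the bookkeeping you anticipate will not close. The transfer-matrix alternative you sketch is viable in principle (the product differs from $A^{2m}$ only in two adjacent pairs of factors), but carrying it out leads to an equivalent identity between traces of powers of $A$, so it is not a shortcut around the combinatorial step.
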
 

\begin{definition}
    A function $V : \Z^d \to \C$ is called completely separable if there exists $V_j: \Z \to \C$ such that $V(n) = V(n_1,\dots, n_d) = V_1(n_1) + \dots + V_d(n_d)$ for all $n \in \Z^d$. If $V$ is completely separable we write $V = V_1 \oplus \dots \oplus V_d$.
\end{definition}

\begin{lemma}~\label{LEM:Sep}
    Let $\Gamma = q_1 \Z \oplus \dots \oplus q_d\Z$. Assume $V = V_1 \oplus \dots \oplus V_d$ and $W = W_1 \oplus \dots \oplus W_d$ are completely separable $\Gamma$-periodic potentials. If $W_j$ and $V_j$ are Floquet isospectral for each $j\in [d]$ then $V$ and $W$ are Floquet isospectral. 
\end{lemma}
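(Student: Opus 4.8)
The plan is to reduce the lemma to the behaviour of spectra under Kronecker (tensor) sums of matrices. The first step is to record the tensor structure of the Floquet matrices. Identifying $\C^{Q}$ with $\C^{q_1}\otimes\cdots\otimes\C^{q_d}$ through the lexicographic indexing of the fundamental domain $\prod_{j=1}^{d}\{0,1,\dots,q_j-1\}$, the difference expression $\Delta$ subject to the boundary condition \eqref{eq2} at a quasimomentum $k=(k_1,\dots,k_d)$ is identified with the Kronecker sum
\begin{equation*}
D_{\mathbf 0}(k)=\sum_{j=1}^{d} I_{q_1}\otimes\cdots\otimes I_{q_{j-1}}\otimes D^{(j)}_{\mathbf 0}(k_j)\otimes I_{q_{j+1}}\otimes\cdots\otimes I_{q_d},
\end{equation*}
where $D^{(j)}_{\mathbf 0}(k_j)$ is the $q_j\times q_j$ matrix realizing the one-dimensional problem on $q_j\Z$ at quasimomentum $k_j$ (the two corner entries carry the phases $e^{\pm 2\pi i k_j}$). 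Since a completely separable potential $V=V_1\oplus\cdots\oplus V_d$ acts as multiplication by $\sum_{j}I\otimes\cdots\otimes\operatorname{diag}(V_j)\otimes\cdots\otimes I$, the same decomposition holds with each $D^{(j)}_{\mathbf 0}(k_j)$ replaced by $D^{(j)}_{V_j}(k_j)$, giving
\begin{equation}\label{eq:kronsep}
D_V(k)=\sum_{j=1}^{d} I\otimes\cdots\otimes D^{(j)}_{V_j}(k_j)\otimes\cdots\otimes I .
\end{equation}
Verifying \eqref{eq:kronsep} amounts to unwinding the definition of $D_V(k)$ and keeping careful track of how the Floquet phases enter each tensor factor; I expect this bookkeeping to be the only mildly delicate point, as there is no conceptual obstacle.

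The second step is the elementary fact that the spectrum of a Kronecker sum, counted with algebraic multiplicity, is determined by the spectra of the summands counted with algebraic multiplicity. Concretely, if $A$ and $B$ are square complex matrices with eigenvalues $\alpha_1,\dots,\alpha_a$ and $\beta_1,\dots,\beta_b$ listed with algebraic multiplicity, then the characteristic polynomial of $A\otimes I+I\otimes B$ is $\prod_{i=1}^{a}\prod_{l=1}^{b}(\lambda-\alpha_i-\beta_l)$. This follows in one line from Schur triangularization: writing $A=PT_AP^{-1}$ and $B=RT_BR^{-1}$ with $T_A,T_B$ upper triangular, the matrix $(P\otimes R)^{-1}(A\otimes I+I\otimes B)(P\otimes R)=T_A\otimes I+I\otimes T_B$ is upper triangular with diagonal entries $\alpha_i+\beta_l$.

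Finally, I would fix $k=(k_1,\dots,k_d)\in\R^d$ and combine the two steps. The hypothesis that $V_j$ and $W_j$ are Floquet isospectral says exactly that $D^{(j)}_{V_j}(k_j)$ and $D^{(j)}_{W_j}(k_j)$ have the same characteristic polynomial for every $j\in[d]$. Applying the Kronecker-sum formula inductively along \eqref{eq:kronsep} (at stage $j$ one treats the Kronecker sum of the first $j$ tensor factors as a single block and adds the $(j+1)$-st) shows that $D_V(k)$ and $D_W(k)$ have the same characteristic polynomial, whence $\sigma_V(k)=\sigma_W(k)$. As $k\in\R^d$ was arbitrary, $V$ and $W$ are Floquet isospectral. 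One could equivalently phrase the second step as the multiset identity for $\sigma(A\boxplus B)$ together with a convolution of multiplicities, but the characteristic-polynomial version makes the induction cleanest.
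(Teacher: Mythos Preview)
Your argument is correct. The paper itself does not spell out a proof but merely invokes a well-known separation-of-variables result; your Kronecker-sum decomposition \eqref{eq:kronsep} together with the Schur-triangularization computation of the eigenvalues of a Kronecker sum is precisely the standard proof of that result, so the two agree in substance, with your version having the virtue of being self-contained.
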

\begin{proof}
\rod{This follows from a well-known} result e.g. \cite[Theorem 4.14]{GTES}.
\end{proof}

\begin{proof}[Proof of Reduction of Theorem 3.1 to Theorem 3.2]
Notice that $V$ and $\bf{0}$ are both completely separable potentials. By Lemma ~\ref{LEM:Sep}, to prove Theorem~\ref{THM:3} it suffices to show that the $2m \Z$-periodic potential $v$ is isospectral to the $2m \Z$-periodic $\bf{0}$. Furthermore, for the one dimensional Schr\"odinger operator, it is well-known that the Floquet isospectrality of two potentials is determined by their values at a single quasi-momenta(e.g. ~\cite[page 1]{Kapii}). Let us fix the quasi-momtenta $k = 0$. Then $V$ and $\bf{0}$ are isospectral if $D_v$ and $D_{\bf{0}}$ have the same eigenvalues, each with the same multiplicity for each respective matrix.
\end{proof}

\section{Combinatorial Formulation and Proof of Theorem~\ref{THM:3.2}}~\label{SEC:4}

\rod{Let $P_v(\lambda)$ and $P_{\bf{0}}(\lambda)$ denote the characteristic polynomials of $D_v$ and $D_{\bf{0}}$, respectively. In order to prove Theorem \ref{THM:3.2} it suffices to check that
\begin{equation}\label{eq:characpol}
P_v(\lambda)= P_{\bf{0}}(\lambda)\,\,\text{for all}\,\,\lambda\in \mathbb{C}.
\end{equation}
Before taking $v$ to have the specific form \eqref{eq:potential} we investigate the difference $P_v(\lambda)-P_{\bf{0}}(\lambda)$ when \[v = (v_1, v_2, 0, \dots, 0, v_{3}, v_{4}, 0, \dots, 0).\] for arbitrary values of $v_1,\ldots,v_4$. Note that in this case the polynomial $P_v(\lambda)-P_{\bf{0}}(\lambda)$ has degree at most $\lambda^{2m-1}$ and each of its coefficients must depend on at least one of the variables $v_1,\ldots,v_4$. More can be said and, in fact, we have the following complete description of the above difference.  Define \[ F_i(v) := [\lambda^i] \left(P_v(\lambda) - P_{\bf{0}}(\lambda)\right) = 0 \text{ for }i \in \{ 0,\dots, 2m-1\}.\]
}

\begin{theorem}\label{thm:Fnformula}
The following formulas for $F_{2m-k}(v)$ hold:

\[F_{2m-k}(v) = v_1v_2v_3v_4  (-1)^{\ell} (\sum_{i = 0}^{\ell-2} S(m-2, i) S(m-2, \ell-2-i)) + (v_1 v_2 + v_3 v_4) (-1)^{\ell-1} S(2m-2,\ell-1)\] \[ + (v_1 v_3 + v_2 v_4) (-1)^{\ell-1} (\sum_{i = 0}^{\ell-1} S(m-1, i) S(m-1 , \ell-1-i)) \] \[ + (v_1 v_4 + v_2 v_3) (-1)^{\ell-1} (\sum_{i = 0}^{\ell-1} S(m, i) S(m-2, \ell-1-i)), \]
if $k=2\ell$ for $\ell \in [m-1]$.
\[ F_{2m-k}(v) = (v_1v_2v_3+v_1v_2v_4+v_1v_3v_4+v_2v_3v_4)  (-1)^{\ell} (\sum_{i = 0}^{\ell-1} S(m-2, i) S(m-1, \ell-1-i)) \] \[ + (v_1 +v_2 + v_3 +v_4) (-1)^{\ell+1} S(2m-1,\ell),\] 
if $k=2\ell+1$ for $\ell \in [m]$.
    
\end{theorem}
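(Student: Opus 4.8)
The plan is to compute $F_{2m-k}(v)$ by expanding the characteristic polynomials via the disjoint-cycle-cover formula~\eqref{eqn:det trans} applied to $\lambda I - D_v$ and $\lambda I - D_{\bf{0}}$, and then isolating the contributions of those cycle covers that actually see the perturbation. Recall that $D_v$ differs from $D_{\bf 0}$ only in the four diagonal entries $v_1, v_2, v_{m+1}, v_{m+2}$ (I will write $v_3 = v_{m+1}$, $v_4 = v_{m+2}$ to match the statement). In the digraph of $\lambda I - D_v$, a diagonal entry $v_j$ appears only in the weight of the length-$1$ loop at vertex $j$, which is $\lambda - v_j$ (versus $\lambda$ in the unperturbed graph). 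Hence every disjoint cycle cover $\eta$ contributes the same monomial to $P_v$ and to $P_{\bf 0}$ \emph{except} when $\eta$ uses at least one of the loops at $1, 2, m+1, m+2$; only those covers survive in the difference $P_v(\lambda) - P_{\bf 0}(\lambda)$.

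The first main step is therefore a bookkeeping step: classify the disjoint cycle covers of the digraph of $D_v$ (which is $J_{2m}$ together with the two "wrap-around" edges between vertices $1$ and $2m$ coming from the periodic boundary condition — but note these wrap-around edges only matter for covers using the $2$-cycle $(1, 2m)$, and since vertices $1, 2m$ are not among the four perturbed vertices unless… wait, $1$ is perturbed) according to which subset $T \subseteq \{1, 2, m+1, m+2\}$ of the distinguished vertices are covered by a loop. When we expand $\prod_{j \in T}(\lambda - v_j)$ and subtract the all-$\lambda$ term, the coefficient of $\prod_{j \in T} v_j$ (times a power of $\lambda$) is $\pm 1$; organizing by $T$ produces exactly the monomials $v_1v_2v_3v_4$, $v_iv_j$, $v_iv_jv_k$, and $v_i$ appearing in the theorem. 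For each such $T$, the vertices not in $T$ (and not otherwise forced) split the path $\{1, \dots, 2m\}$ into independent Jacobi-type segments, and the number of ways to cover each segment by $1$- and $2$-cycles with a prescribed number of $2$-cycles is governed by $S(\cdot, \cdot)$; the convolutions $\sum_i S(a, i) S(b, \ell - c - i)$ in the statement are precisely the generating-function products that count the ways to distribute the remaining $2$-cycles among two segments. The sign $(-1)^{\ell}$ or $(-1)^{\ell \pm 1}$ is tracked by combining $\sgn(\eta)$ (each $2$-cycle is odd, each loop even, so a cover with $p$ $2$-cycles has sign $(-1)^p$) with the sign picked up from expanding $\lambda I - D_v$ and from the $-v_j$ in each selected loop.

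The core of the argument is thus the case analysis over the sixteen subsets $T$, grouped by $|T|$ and by the "shape" of $T$ relative to the two special pairs $\{1,2\}$ and $\{m+1, m+2\}$ (adjacent within a pair vs. split across pairs vs. the opposite-corner pairing $\{1, m+2\}, \{2, m+1\}$): this is what makes the four distinct bracketed coefficients $(v_1v_2 + v_3v_4)$, $(v_1v_3 + v_2v_4)$, $(v_1v_4 + v_2v_3)$ appear with \emph{different} combinatorial sums — the segment lengths that remain depend on whether the two removed loops are inside one block of length $m$ or straddle the two blocks. I would set up a generating-function identity: the number of $1$/$2$-cycle covers of a path on $n$ vertices, weighted by $x$ per $2$-cycle, is $\sum_p S(n, p) x^p$, and when a vertex is deleted (loop forced, hence removed) or a pair of adjacent vertices absorbed into a $2$-cycle, the path splits multiplicatively; extracting $[\lambda^{2m-k}]$ becomes extracting the coefficient of a fixed total number of $2$-cycles, i.e. a convolution. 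Parity of $k$ matters because using an odd-size subset $T$ forces an odd number of forced loops, shifting whether the remaining vertices can be paired up evenly — this is why the $k = 2\ell$ and $k = 2\ell+1$ formulas have different monomial content (even-degree vs. odd-degree in the $v_j$).

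The main obstacle I anticipate is not any single computation but getting the \emph{segment decomposition and its indices exactly right} in each sub-case: when, say, $T = \{1, m+1\}$, deleting the loops at $1$ and $m+1$ leaves the path on $\{2, \dots, m\}$ (length $m-1$) and $\{m+2, \dots, 2m\}$ (length $m-1$), explaining the $S(m-1, i)S(m-1, \ell-1-i)$ term — but I must be careful that the periodic wrap-around edge $\{1, 2m\}$ is unavailable once vertex $1$ carries a forced loop, so the two segments really are independent and no extra $2$-cycle across the "seam" is possible. Verifying that every cycle cover is accounted for exactly once, with the correct sign, across all sixteen subsets — and that the $\ell$-ranges stated ($\ell \in [m-1]$ for even $k$, $\ell \in [m]$ for odd $k$) are exactly the nonvanishing range — is the delicate part; I would double-check against the Macaulay2 data for small $m$ (say $m = 2, 3$) as a sanity check before writing the final induction-free combinatorial proof.
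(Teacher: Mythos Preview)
Your proposal is correct and follows essentially the same approach as the paper: both expand the characteristic polynomial via the cycle-cover formula, isolate the covers that use the self-loops at the four distinguished vertices (the paper does this by splitting each loop $(i,i)$ into two edges of weights $v_i$ and $-\lambda$ in a refined digraph $G'$, which is exactly your ``classify by the subset $T$ of forced $v_j$-loops and expand $\prod_{j\in T}(\lambda-v_j)$'' step), and then observe that deleting those vertices leaves a disjoint union of Jacobi path digraphs whose cover counts give the $S(\cdot,\cdot)$ convolutions with the stated signs. Your explicit attention to the wrap-around $2$-cycle $(1,2m)$ is well placed---the paper handles it implicitly (once a loop is forced at vertex $1$ the wrap edge is unavailable, and when neither $1$ nor $2m$ carries a forced $v$-loop the wrap simply makes the residual segment a single $J_{2m-2}$ or $J_{2m-1}$ path rather than two pieces)---and the paper, like you, only writes out one representative case in detail and asserts the rest are analogous.
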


\begin{remark}~\label{RM:2} \rod{It readily follows that, for the above range of values for $\ell$, the parity of $k$ and any of the monomials appearing in $[F_{2m-k}(v)$ is the same.} 
\end{remark}
\begin{proof}

Here we spell out the details for the case where $k$ is even.The proof for odd values of $k$ is completely analogous.
Thus, let $k = 2\ell$ for some $\ell \in \Z^+$ \rod{$l\in [m-1]$}. 
Let $G$ be the digraph of $D_v(z)-\lambda I$ and let $\mathcal{U}_{G}$ the collection of disjoint cycle covers of $G$. By \eqref{eqn:det trans} we have that $$P_v(\lambda) = \sum_{\eta \in \mathcal{U}_G} \sgn(\eta) w(\eta).$$ Define $G' = (\mathcal{V},\mathcal{E}',w')$ to be a refined version of $G$, where edges \rod{of the form} $(i,i)$ are replaced by \rod{two new edges} $(i,i)_1$ and $(i,i)_2$ with weights $w'((i,i)_1) = v_i$, $w'((i,i)_2) = -\lambda$.\rod{ All other edges and weight assignments are kept fixed} (see Figure~\ref{Fig:2}). Let $\mathcal{U}_{G'}$ be the collection of disjoint cycle covers of $G'$. Notice that if $\eta \in \mathcal{U}_{G}$ does not contain $(i,i)$ then $\eta \in \mathcal{U}_{G'}$ and $w(\eta) = w'(\eta)$. Further given an $\eta \in \mathcal{U}_{G}$ with a single self loop edge $(i,i)$ there exists disjoint cycle covers $\alpha$ and $\beta$ in $\mathcal{U}_{G'}$ such that $w(\eta) = w'(\alpha) + w'(\beta)$. In general $\eta \in \mathcal{U}_{G}$ splits into $2^l$ disjoint cycle covers $\alpha_1,\dots,\alpha_{2^l}$ of $G'$ where $l$ is the number of self loops in $\eta$, such that $w(\eta) = \sum_{i=1}^{2^l} w(\alpha_i)$. Thus we have that $$P_v(\lambda) = \sum_{\eta \in \mathcal{U}_{G'}} \sgn(\eta) w'(\eta).$$

\begin{figure}[ht]
 \begin{tikzpicture}[->]
    \node[ellipse,draw] (a) at (0,0) {1};
    \node[ellipse,draw] (b) at (2,0) {2};
    \node[ellipse,draw] (c) at (4,0) {3};
    \node[ellipse,draw] (d) at (5.4,0) {m};
    \node[ellipse,draw] (e) at (7.4,0) {m+1};   
    \node[ellipse,draw] (f) at (9.4,0) {m+2};   
    \node[ellipse,draw] (g) at (11.4,0) {m+3};   
    \node[ellipse,draw] (h) at (-2,0) {2m};   

    \node at (-2.9,0) {\dots};
    \path (a) edge[bend right]              node[below] {1} (b);
    \path (b) edge[bend right]           node[above] {1} (a);
    \path (b) edge[bend right]              node[below] {1} (c);
    \path (c) edge[bend right]           node[above] {1} (b);
    \node at (4.7,0) {\dots};
    \path (d) edge[bend right]              node[below] {1} (e);
    \path (e) edge[bend right]           node[above] {1} (d);
    \path (e) edge[bend right]              node[below] {1} (f);
    \path (f) edge[bend right]           node[above] {1} (e);
    \path (f) edge[bend right]              node[below] {1} (g);
    \path (g) edge[bend right]           node[above] {1} (f);
    \node at (12.5,0) {\dots};
    \path (a) edge [loop below] node {$v_1$} (a);
    \path (b) edge [loop below] node {$v_2$} (b);
    \path (a) edge [loop above] node {$-\lambda$} (a);
    \path (b) edge [loop above] node {$-\lambda$} (b);
    \path (c) edge [loop above] node {$-\lambda$} (c);
    \path (d) edge [loop above] node {$-\lambda$} (d);
    \path (e) edge [loop below] node {$v_3$} (e);
    \path (f) edge [loop below] node {$v_4$} (f);
    \path (e) edge [loop above] node {$-\lambda$} (e);
    \path (f) edge [loop above] node {$-\lambda$} (f);
    \path (g) edge [loop above] node {$-\lambda$} (g);
    \path (h) edge [loop above] node {$-\lambda$} (h);
    \path (a) edge[bend left]              node[below] {1} (h);
    \path (h) edge[bend left]           node[above] {1} (a);
  \end{tikzpicture} 
\caption{The refined digraph $G'$. Notice we do not include the weight $0$ self loops after specializing.}
\label{Fig:2}
\end{figure}
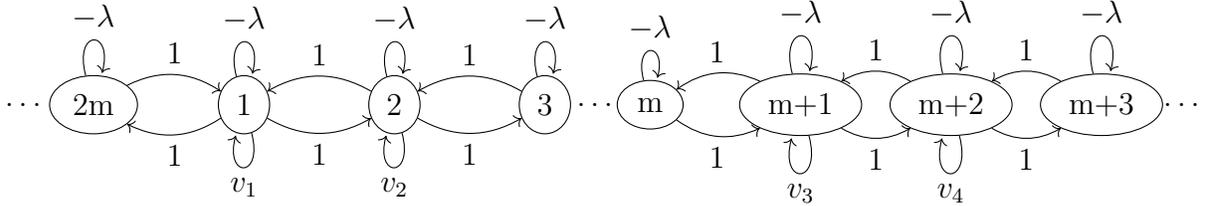

To solve for $[v_1v_2v_3v_4 \lambda^{2m-k}]P_v(\lambda)$, consider the collection \rod{{$\mathcal{C}$} of $\eta \in \mathcal{U}_{G'}$ with the following properties
\begin{itemize}
\item $w'(\eta) \neq 0$.
    \item $\eta$ contains the edges $(1,1)_1$, $(2,2)_1$, $(m+1,m+1)_1$, $(m+2,m+2)_1$.
    \item $\eta$ has $2m-k$ other edges of the form $(i,i)_2$.
\end{itemize}
}

Given an $\eta \in \mathcal{C}$, after fixing the edges $(1,1)_1$, $(2,2)_1$, $(m+1,m+1)_1$, and $(m+2,m+2)_1$, the remaining digraph $G' \smallsetminus \{1,2,m+1,m+2\}$ is a disjoint union of two $J_{(m-2)}$ digraphs, denoted $G'_1$ and $G'_2$. It follows that the remaining cycles of $\eta$ are the $2m-k$ $1$-cycles given by $(i,i)_2$ and $\ell-2$ $2$-cycles. Thus 
\rod{\begin{equation}\label{eq:coeffsingledeg4}
[v_1v_2v_3v_4 \lambda^{2m-k}] \sgn(\eta) \eta_{w'}=(-1)^{\ell}.    
\end{equation}
In particular,
\begin{equation}\label{eq:fullcoeffdeg4}
[v_1v_2v_3v_4 \lambda^{2m-k}]P_v(\lambda)= (-1)^{\ell}\lvert \mathcal{C} \rvert,
\end{equation}
and hence we are left to count the elements of $T$. Notice that after fixing the edges $(1,1)_1$, $(2,2)_1$, $(m+1,m+1)_1$, and $(m+2,m+2)_1$, the remaining $1$-cycles of $\eta$ in $\mathcal{C}$ are determined by the $2$-cycles of $\eta$. Thus to count $T$, we only need to count the number of way we can choose $\ell-2$ $2$-cycles from the disjoint subdigraphs $G'_1$ and $G'_2$. As we must choose $\ell-2$ $2$-cycles, if we choose $i$ $2$-cycles from $G'_1$ we must choose $\ell-2 - i$ $2$-cycles from $G'_2$. 
It follows that 
\begin{equation}\label{eq:Tcard}
\lvert \mathcal{C} \rvert = \sum_{i=1}^{\ell-2} S(m-2,i)S(m-2,\ell-2-i).
\end{equation}
Combining equations \eqref{eq:coeffsingledeg4}, \eqref{eq:fullcoeffdeg4} and \eqref{eq:Tcard} we conclude that
\[[v_1v_2v_3v_4]F_{2m-k} = (-1)^\ell \sum_{i=1}^{\ell-2} S(m-2,i)S(m-2,m-2-i).\]}

The other cases follow through similar arguments. See Figure~\ref{Fig:3} for an illustration of why the coefficients of $v_1v_2$ and $v_3v_4$ agree.
\end{proof}
\begin{remark}
The reason why Remark~\ref{RM:2} holds is because if $t$ is a monomial of $F_{2m-k}$, then $deg(t) + 2m-k$ is the number of $1$-cycles in an $\eta \in \mathcal{U}_{G'}$ such that $[t \lambda^{2m-k}] w'(\eta) \neq 0$. Thus the remaining cycles must be $2$-cycles, and so $k-deg(t)$ must be even; that is, parities of $k$ and $deg(t)$ must agree. 
\end{remark}

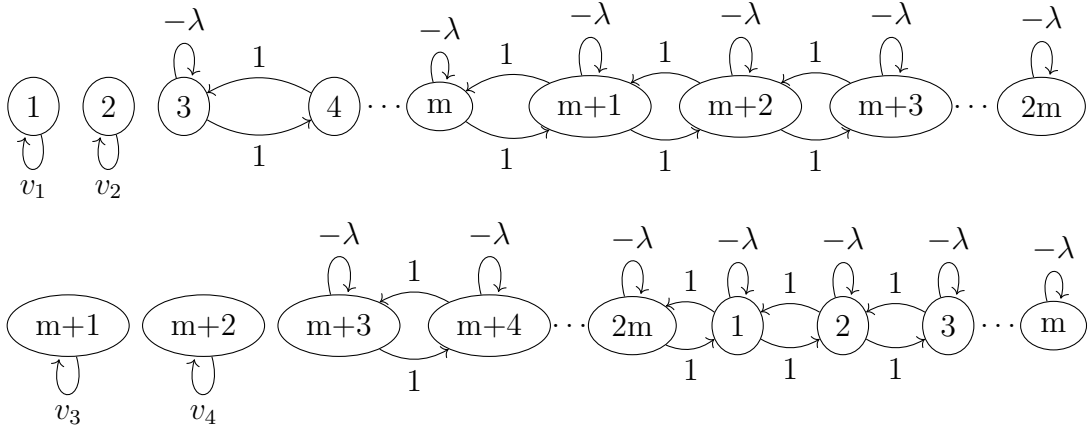
\begin{figure}[ht]
 \begin{tikzpicture}[->]
    \node[ellipse,draw] (a) at (0,0) {1};
    \node[ellipse,draw] (b) at (1,0) {2};
    \node[ellipse,draw] (c) at (2,0) {3};
    \node[ellipse,draw] (l) at (4,0) {4};
    \node[ellipse,draw] (d) at (5.4,0) {m};
    \node[ellipse,draw] (e) at (7.4,0) {m+1};   
    \node[ellipse,draw] (f) at (9.4,0) {m+2};   
    \node[ellipse,draw] (g) at (11.4,0) {m+3};   
    \node[ellipse,draw] (h) at (13.4,0) {2m};   
    
    \node at (4.7,0) {\dots};
    \path (c) edge[bend right]              node[below] {1} (l);
    \path (l) edge[bend right]           node[above] {1} (c);
    \path (d) edge[bend right]              node[below] {1} (e);
    \path (e) edge[bend right]           node[above] {1} (d);
    \path (e) edge[bend right]              node[below] {1} (f);
    \path (f) edge[bend right]           node[above] {1} (e);
    \path (f) edge[bend right]              node[below] {1} (g);
    \path (g) edge[bend right]           node[above] {1} (f);
    \node at (12.5,0) {\dots};
    \path (a) edge [loop below] node {$v_1$} (a);
    \path (b) edge [loop below] node {$v_2$} (b);
    \path (c) edge [loop above] node {$-\lambda$} (c);
    \path (d) edge [loop above] node {$-\lambda$} (d);
    \path (e) edge [loop above] node {$-\lambda$} (e);
    \path (f) edge [loop above] node {$-\lambda$} (f);
    \path (g) edge [loop above] node {$-\lambda$} (g);
    \path (h) edge [loop above] node {$-\lambda$} (h);
  \end{tikzpicture} 

 \begin{tikzpicture}[->]
    \node[ellipse,draw] (a) at (0,0) {m+1};
    \node[ellipse,draw] (b) at (1.8,0) {m+2};
    \node[ellipse,draw] (c) at (3.6,0) {m+3};
    \node[ellipse,draw] (l) at (5.6,0) {m+4};
    \node[ellipse,draw] (d) at (7.5,0) {2m};
    \node[ellipse,draw] (e) at (8.9,0) {1};   
    \node[ellipse,draw] (f) at (10.3,0) {2};   
    \node[ellipse,draw] (g) at (11.7,0) {3};   
    \node[ellipse,draw] (h) at (13.1,0) {m};   
    
    \node at (6.7,0) {\dots};
    \path (c) edge[bend right]              node[below] {1} (l);
    \path (l) edge[bend right]           node[above] {1} (c);
    \path (d) edge[bend right]              node[below] {1} (e);
    \path (e) edge[bend right]           node[above] {1} (d);
    \path (e) edge[bend right]              node[below] {1} (f);
    \path (f) edge[bend right]           node[above] {1} (e);
    \path (f) edge[bend right]              node[below] {1} (g);
    \path (g) edge[bend right]           node[above] {1} (f);
    \node at (12.4,0) {\dots};
    \path (a) edge [loop below] node {$v_3$} (a);
    \path (b) edge [loop below] node {$v_4$} (b);
    \path (c) edge [loop above] node {$-\lambda$} (c);
    \path (d) edge [loop above] node {$-\lambda$} (d);
    \path (e) edge [loop above] node {$-\lambda$} (e);
    \path (f) edge [loop above] node {$-\lambda$} (f);
    \path (g) edge [loop above] node {$-\lambda$} (g);
    \path (h) edge [loop above] node {$-\lambda$} (h);
    \path (l) edge [loop above] node {$-\lambda$} (l);
  \end{tikzpicture} 

\caption{(Above) $G'$ after fixing the cycles $v_1$ and $v_2$ and removing cycles $v_3$ and $v_4$. (Below) $G'$ after fixing the cycles $v_3$ and $v_4$ and removing cycles $v_1$ and $v_2$. Notice how these are the same digraphs, just relabeled; thus indeed the coefficients of $v_1v_2$ and $v_3v_4$ are the same.}
\label{Fig:3}
\end{figure}




Now we can prove Theorem~\ref{THM:3.2}.

\subsection{Proof of Theorem~\ref{THM:3.2}}

\begin{lemma} Let \[v = (v_1, v_2, 0, \dots, 0, v_{3}, v_{4}, 0, \dots, 0)\]
with  $v_1 = 1+i$, $v_2 = 1-i$, $v_3 = -1+i$, and $v_4 = -1-i$. Then $F_{2m-k}(v)=0$ for all $k\in[2m]$.
\end{lemma}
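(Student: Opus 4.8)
The plan is to feed the specific values $v_1=1+i$, $v_2=1-i$, $v_3=-1+i$, $v_4=-1-i$ into the closed formulas of Theorem~\ref{thm:Fnformula} and observe that every symmetric-function-of-the-$v_i$ coefficient that appears there vanishes identically for this choice. So the first step is simply to compute the relevant elementary/partial symmetric combinations of $v_1,\dots,v_4$:
\begin{itemize}
\item $v_1+v_2+v_3+v_4 = (1+i)+(1-i)+(-1+i)+(-1-i) = 0$;
\item $v_1 v_2 = (1+i)(1-i) = 2$ and $v_3 v_4 = (-1+i)(-1-i) = 2$, so $v_1v_2 + v_3v_4 = 2 - 2 \cdot$\,---\,wait, one must be careful: here $v_1 v_2 + v_3 v_4 = 2 + 2 = 4 \neq 0$. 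However $v_1 v_3 + v_2 v_4 = (1+i)(-1+i) + (1-i)(-1-i) = (-2) + (-2)\cdot$\,---\,again recompute: $(1+i)(-1+i) = -1+i-i+i^2 = -1 + i^2 = -2$, and $(1-i)(-1-i) = -1 - i + i + i^2 = -1 + i^2 = -2$, hence $v_1v_3 + v_2v_4 = -4$, while $v_1 v_4 + v_2 v_3 = (1+i)(-1-i) + (1-i)(-1+i)$; note $(1+i)(-1-i) = -(1+i)^2 = -2i$ and $(1-i)(-1+i) = -(1-i)^2 = -(-2i) = 2i$, so $v_1v_4 + v_2v_3 = 0$.
\end{itemize}
Thus the naive substitution does \emph{not} make all coefficients vanish separately; instead one must combine terms. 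The real first step, therefore, is to group the $\ell$-even formula as
$v_1v_2v_3v_4\,(\cdots) + (v_1v_2 + v_3v_4)(\cdots) + (v_1v_3+v_2v_4)(\cdots) + (v_1v_4+v_2v_3)(\cdots)$ and note that with our values this collapses to $4\cdot(-1)^{\ell-1}S(2m-2,\ell-1) + (-4)(-1)^{\ell-1}\big(\sum_{i=0}^{\ell-1}S(m-1,i)S(m-1,\ell-1-i)\big) + v_1v_2v_3v_4(-1)^\ell\big(\sum_{i=0}^{\ell-2}S(m-2,i)S(m-2,\ell-2-i)\big)$, since the $v_1v_4+v_2v_3$ term drops out; and $v_1v_2v_3v_4 = 4 \cdot$\,---\,$(v_1v_2)(v_3v_4) = 2\cdot 2 = 4$. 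For the $\ell$-odd formula the term $(v_1+v_2+v_3+v_4)(\cdots)$ vanishes outright, leaving only $(v_1v_2v_3 + v_1v_2v_4 + v_1v_3v_4 + v_2v_3v_4)(-1)^\ell\big(\sum_{i=0}^{\ell-1}S(m-2,i)S(m-1,\ell-1-i)\big)$, and one computes $v_1v_2v_3 + v_1v_2v_4 + v_1v_3v_4 + v_2v_3v_4 = v_1v_2(v_3+v_4) + v_3v_4(v_1+v_2) = 2\cdot(-2) + 2\cdot 2 = 0$. So the odd case is immediate.

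The substance of the proof is therefore the even case, which reduces to the purely combinatorial identity
\[
4\,(-1)^\ell \sum_{i=0}^{\ell-2} S(m-2,i)\,S(m-2,\ell-2-i) \;+\; 4\,(-1)^{\ell-1} S(2m-2,\ell-1) \;-\; 4\,(-1)^{\ell-1}\sum_{i=0}^{\ell-1} S(m-1,i)\,S(m-1,\ell-1-i) \;=\; 0,
\]
i.e. after dividing by $4(-1)^{\ell-1}$,
\[
\sum_{i=0}^{\ell-1} S(m-1,i)\,S(m-1,\ell-1-i) \;=\; S(2m-2,\ell-1) \;+\; \sum_{i=0}^{\ell-2} S(m-2,i)\,S(m-2,\ell-2-i).
\]
This is a clean Vandermonde-type convolution identity for the numbers $S(m,p)=\binom{m-p}{p}$. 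The plan is to prove it combinatorially using the interpretation from Section~\ref{SEC:2}: $S(m,p)$ counts disjoint cycle covers of the Jacobi path digraph $J_m$ with exactly $p$ $2$-cycles, equivalently the ways of placing $p$ non-overlapping dominoes on a path of $m$ vertices. The left side counts domino placements with $\ell-1$ dominoes total on \emph{two} disjoint copies of a path on $m-1$ vertices; the right side, by a classification-by-cases argument on the middle region (whether a domino straddles the junction when the two length-$(m-1)$ paths are concatenated into one path on $2m-2$ vertices), should split exactly into ``no straddling domino'' $= S(2m-2,\ell-1)$ restricted appropriately versus ``one straddling domino,'' which removes two vertices and leaves two paths of length $m-2$ carrying $\ell-2$ dominoes. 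I would make this precise by a direct bijection, or alternatively just verify it algebraically via the generating function $\sum_p S(m,p)x^p = \sum_p \binom{m-p}{p}x^p$, which satisfies the Fibonacci-type recursion $f_m(x) = f_{m-1}(x) + x f_{m-2}(x)$; then $f_{m-1}(x)^2 = f_{2m-2}(x) + x f_{m-2}(x)^2$ should follow from a short induction on $m$ using that recursion, and comparing coefficients of $x^{\ell-1}$ gives exactly the needed identity.

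The main obstacle I anticipate is \emph{not} any single hard step but rather bookkeeping: making sure the index ranges in the sums from Theorem~\ref{thm:Fnformula} (which are written with lower limit $i=0$ in the statement but $i=1$ in the proof sketch) are handled consistently, that the boundary cases $\ell=1$ (and $\ell=m-1$, where $S(m-2,i)$ may hit its support boundary) do not break the identity, and that the generating-function identity $f_{m-1}^2 = f_{2m-2} + x f_{m-2}^2$ is actually correct rather than off by a shift --- this should be double-checked on small $m$ (e.g. $m=2,3$) before committing to the induction. Once the convolution identity is established, the lemma follows by plugging the symmetric-function values computed above into Theorem~\ref{thm:Fnformula}: the odd-$k$ coefficients vanish because $v_1+v_2+v_3+v_4 = 0$ and $v_1v_2v_3+v_1v_2v_4+v_1v_3v_4+v_2v_3v_4 = 0$, and the even-$k$ coefficients vanish because $v_1v_4+v_2v_3 = 0$ kills one term and the identity forces the remaining three terms to cancel. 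Hence $F_{2m-k}(v) = 0$ for all $k \in [2m]$, as claimed.
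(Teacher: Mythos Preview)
Your approach is exactly the paper's: compute the symmetric combinations of the $v_i$, observe the odd-$k$ case is immediate, and reduce the even-$k$ case to a combinatorial identity proved by splitting the cycle covers of $J_{2m-2}$ according to whether the middle $2$-cycle is present. The paper carries out precisely this bijection (your ``straddling domino'' argument, in different language).

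There is, however, a sign slip in your reduction. Dividing your displayed equation by $4(-1)^{\ell-1}$ gives $-A + S(2m-2,\ell-1) - B = 0$, i.e.
\[
S(2m-2,\ell-1) \;=\; \sum_{i=0}^{\ell-1} S(m-1,i)\,S(m-1,\ell-1-i) \;+\; \sum_{i=0}^{\ell-2} S(m-2,i)\,S(m-2,\ell-2-i),
\]
not your stated $B = S(2m-2,\ell-1) + A$ (which fails already at $m=3$, $\ell=2$: $2 \neq 3+1$). Your own combinatorial description in fact proves the correct version: placements on the length-$(2m-2)$ path split into ``no straddling domino'' (giving the $S(m-1,\cdot)$ convolution) plus ``one straddling domino'' (giving the $S(m-2,\cdot)$ convolution). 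Correspondingly, the generating-function identity should read $f_{2m-2} = f_{m-1}^2 + x f_{m-2}^2$, not $f_{m-1}^2 = f_{2m-2} + x f_{m-2}^2$; this is exactly the anticipated ``off by a shift'' you warned yourself to check. With that correction your proof goes through and coincides with the paper's.
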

We split the proof into two cases.
\begin{enumerate}[label=(\roman*)]
    \item $k$ \emph{is odd.}

Notice that $v_1 = -v_4$ and $v_2 = -v_3$, thus we have that
\begin{equation*}
\left\{
\begin{array}{l}
 v_1 + v_2 + v_3 + v_4 = 0. \\
 v_1v_2v_3+v_2v_3v_4+v_1v_3v_4+v_1v_2v_4 = 0.\\
\end{array}\right.
\end{equation*}


Combined with Theorem \ref{thm:Fnformula}, this completes the proof in the case that $k$ is odd.
\item $k$ \emph{is even} ($k = 2 \ell$).

With the given choice of $v$ we have
\begin{equation*}
\left\{
\begin{array}{l}
v_1v_2v_3v_4 = 4\\
v_1 v_2 + v_3 v_4 =4\\
v_1 v_3 + v_2 v_4=-4\\
v_1 v_4 + v_2 v_3 = 0.\\
\end{array}\right.
\end{equation*}

Thus, according to Theorem \ref{thm:Fnformula} we are left to show that 
\begin{equation}\label{eq:combinatorialident}
S(2m-2,\ell-1)= \sum_{i = 0}^{\ell-2} S(m-2, i) S(m-2, \ell-2-i) + \sum_{i = 0}^{\ell-1} S(m-1, i) S(m-1 , \ell-1-i).
\end{equation}

To show that \eqref{eq:combinatorialident} holds, we will prove that both sides count the same collection of objects. Indeed, according to the definition of $S(2m-2,\ell-1)$, the left-hand side counts the number of disjoint cycle covers of the digraph $J_{(2m-2)}$ that have $\ell-1$ $2$-cycles. We now claim that the right-hand side enumerates the same collection. Indeed, suppose we have a $J_{(2m-2)}$ digraph and we wish to count the number of disjoint cycle covers with $\ell-1$ $2$-cycles. We may partition our collection of disjoint cycle covers into two sets: those that contain the cycle $((m-1,m),(m,m-1))$, denoted by $\mathcal{C}_1$, and those that do not, which we call $\mathcal{C}_2$. 

To count $\mathcal{C}_1$ we can start by removing the vertices $m-1$ and $m$ from $J_{(2m-2)}$, we are left with the disjoint union of two $J_{(m-2)}$ digraphs, which we denote $G_1'$ and $G_2'$ respectively. In particular, the number of elements in $\mathcal{C}_1$ is the number of disjoint cycle covers of the disjoint union of $G_1'$ and $G_2'$ with $\ell-2$ $2$-cycles. It readily follows that   $$|\mathcal{C}_1|=\sum_{i = 0}^{\ell-2} S(m-2, i) S(m-2, \ell-2-i).$$
We now proceed to count the elements of $\mathcal{C}_2$. Note that the edges $(m-1,m)$ and $(m,m-1)$ cannot appear in any element of $\mathcal{C}_2$. Removing these edges from $J_{(2m-2)}$, it follows that $\eta \in \mathcal{C}_2$ if and only if $\eta$ is a disjoint cycle cover of $G'_1\dot{\bigsqcup} G'_2$ with $\ell-1$ $2$-cycles where $G'_1$ and $G'_2$ are two $J_{(m-1)}$ digraphs and $\dot{\bigsqcup}$  denotes a disjoint union. It readily follows that $$|\mathcal{C}_2|=\sum_{i = 0}^{\ell-1} S(m-1, i) S(m-1 , \ell-1-i).$$ 
As $S(2m-2,\ell-1)=|\mathcal{C}_1|+|\mathcal{C}_2|$ we have established \eqref{eq:combinatorialident}, finishing the proof.
\end{enumerate}
\hfill \qed

\section*{Acknowledgments}
This research was conducted as part of the ongoing Undergraduate Research Program,``STODO" (Spectral Theory Of Differential Operators), at Texas A\&M University. We are grateful for the support provided by the College of Arts and Sciences Undergraduate Research Program at Texas A\&M University. 
This work was also partially supported by NSF
  DMS-2000345, DMS-2052572, and DMS-2246031. R. Matos is partially supported by CNPq's grant ``Projeto Universal'' (402952/2023-5)
  
	
\bibliographystyle{abbrv} 
	\bibliography{REU}
	
\end{document}